 \newtheorem{thm}{Theorem}[section]
 \newtheorem{lem}[thm]{Lemma}
 \newtheorem{prop}[thm]{Proposition}
 \theoremstyle{definition}
 \newtheorem{defn}[thm]{Definition}
 \theoremstyle{remark}
 \newtheorem{rem}[thm]{Remark}
 \numberwithin{equation}{section}
 \def\Rn{{\mathbb{R}^n}}
\begin{document}

%
%
%
%
%
%
%
%
%

\title[Associated Generalized Classical Lorentz Space]
 {Associated Spaces of Generalized Classical Lorentz Spaces $G\Lambda_{p,\psi;\varphi}$}

\author[C. Aykol]{Canay Aykol}

\address{%
Ankara University\\
Department of Mathematics\\
06100 Tandogan\\
Ankara\\
Turkey
}

\email{aykol@science.ankara.edu.tr}

\author[A. Gogatishvili]{Amiran Gogatishvili}

\address{%
Institute of Mathematics\\
Academy of Sciences of the Czech Republic\\
Zitna 25, CZ 115 67 Praha 1\\
Czech Republic\\
}

\email{ gogatish@math.caz.cz}

\thanks{The research was supported by exchange program between Academy of Sciences of the Czech Republic and TUBITAK. C. Aykol was partially supported by The Turkish
Scientific and Technological Research Council (TUBITAK, programme 2211). The research of A. Gogatishvili was partially supported by the grant P201-13-14743S of the Grant Agency of the Czech Republic and RVO: 67985840, Shota Rustavely National Science Foundation grants no. 13/06 (Geometry of function spaces, interpolation and embeding theorems) and  no. 31/48 (Operators in some function spaces and their applications in Fourier Analysis). The research of V.S. Guliyev was supported by the grant of 2012-Ahi Evran University Scientific Research Projects (PYO-FEN 4001.13.18).}
\author[V. S. Guliyev]{Vagif S. Guliyev}
\address{(1) Ahi Evran University\\
Department of Mathematics\\
Bagbasi campus\\
Kirsehir\\
Turkey}
\address{(2) Institute of Mathematics and Mechanics\\
of NAS of Azerbaijan 9, B.Vaxabzade\\
Baku\\
Azerbaijan Republic\\}
\email{vagif@guliyev.com}

\subjclass{Primary 46E30; Secondary 42B35}

\keywords{ Classical Lorentz spaces, generalized classical Lorentz spaces, reverse Hardy inequalities, associate spaces.}

\date{October 3, 2013}

\begin{abstract}
In this paper we have calculated the associate norms of the
$G\Lambda_{p,\psi;\varphi}$  generalized classical Lorentz spaces.
\end{abstract}

\maketitle

\

\section{Introduction}

This paper aims at calculating the associate norm of the generalized classical Lorentz spaces.
We have used the characterization of the weighted reverse Hardy inequality to calculate the associate norm of the spaces $G\Lambda_{p,\psi;\varphi}$.

\

\section{Definitions and Preliminary Tools}

Let $E$ be a measurable subset of $\Rn$.
We denote by $L_{p,E}$ the class of all measurable functions $f$ defined on $E$ for which
\begin{equation*}
\|f\|_{L_{p,E}}:=\left(\int_{E}|f(y)|^{p}dy\right)^{\frac{1}{p}}<\infty,0<p<\infty,
\end{equation*}
\begin{equation*}
\|f\|_{L_{\infty,E}}:=\sup\{\alpha:|\{y\in E:|f(y)|\geq \alpha\}|>0\},
\end{equation*}
and denote by $WL_{p}$ the weak $L_{p}$ space such that
\begin{equation*}
\|f\|_{WL_{p,E}}:=\{f:\sup_{\alpha>0}\alpha\mu_{f}(\alpha)^{1/p}<\infty\},
\end{equation*}
where $\mu_{f}(\alpha)$ denotes the distribution
function of $ f $ given by
\begin{equation*}
\mu_{f}(\alpha)=\mu\{x\in E
:|f(x)|>\alpha \}.
\end{equation*}

%

\

Let for $0<p, q \le \infty$ and $r>0$
\begin{equation}\label{FBGkb}
\|f\|_{p,q;(0,r)}:=\| t^{\frac{1}{p}-\frac{1}{q}} f^{\ast}(t) \|_{q, (0,r)},
\end{equation}
where $f^{\ast}$ is the decreasing rearrangement of $f$  defined by
\begin{equation*}
f^{\ast}(t)=\inf \;\{\lambda >0~:~\mu_{f}(\lambda)\leq t\},\
\ \forall t\in (0,\infty).
\end{equation*}

\

We denote by $\mathfrak{M}(\Rn,\mu)$ be the set of all extended real valued $\mu$-measurable functions on $\Rn$  and $\mathfrak{M}^{+}(0,\infty)$ the set of all non-negative measurable functions on $(0,\infty)$,
$ \mathfrak{M}^{+}(0,\infty ;\uparrow) $ the set of all non-decreasing functions from $\mathfrak{M}^{+}(0,\infty) $.

\

Now we recall  definitions of Lorentz, classical Lorentz and generalized classical Lorentz spaces.

\

\begin{defn}\label{cay2}
The Lorentz space $L_{p,q} \equiv L_{p,q}(\Rn)$, $0<p, q \le \infty$ is the collection of all measurable functions $f$ on $\Rn $ such the quantity
\begin{equation}\label{ayl1}
\|f\|_{p,q}:=\|f\|_{p,q;(0,\infty)}=\| t^{\frac{1}{p}-\frac{1}{q}} f^{\ast}(t) \|_{q, (0,\infty)}
\end{equation}
is finite.
\end{defn}
Note that $L_{p,\infty}(\Rn)=WL_{p}(\Rn)$ (see, for example, \cite{St1}).

\

Note that, $L_{p,p}=L_p$ for $0<p \le \infty$.

\

If $1\leq q\leq p$ or $p=q=\infty$, then the functional $\|f\|_{p,q}$ is a norm.

\

For $0<q \le r \le \infty$ we have, with continuous embeddings, that

$$
L_{p,q} \subset L_{p,r}.
$$

\

The function $f^{\ast\ast }: (0,\infty)\rightarrow[0,\infty]$ is defined
as
\begin{equation*}
f^{\ast
\ast}(t)=\frac{1}{t}\int_{0}^{t}f^{\ast}(s)ds.
\end{equation*}

\

In the case $ 0<p,q \le \infty $, we give a functional $\|\cdot\|_{p,q}^{\ast}$ by
\begin{equation*}
\|f\|_{p,q}^{\ast}:=\|f\|_{p,q;(0,\infty)}^{\ast}=\| t^{\frac{1}{p}-\frac{1}{q}} f^{\ast \ast}(t) \|_{q, (0,\infty)}
\end{equation*}
(with the usual modification if $0<p\leq\infty$, $q=\infty$) which is a norm on $ L_{p,q}(\Rn)$ for $1<p<\infty$, $1\leq q \leq\infty$ or $p=q=\infty$.

\

If $1<p\leq\infty,$ $1\leq q \leq \infty$, then
\begin{equation}\label{cay}
\|f\| _{p,q}\leq \|f\|_{p,q}^{\ast}
\leq \frac{p}{p-1}\|f\|_{p,q}.
\end{equation}

About $L_{p,q}(\Rn)$ Lorentz spaces see \cite{Hunt66, St1}.

\

\begin{defn}
Let $0<p,q\leq\infty$ and $\psi\in \mathfrak{M}^{+}(0,\infty)$. We denote by $\Lambda_{p,\psi}(\Rn)$
the classical Lorentz spaces, the spaces of all measurable functions with finite quasinorm
\begin{equation*}
\Lambda_{p,\psi}(\Rn):=\{f \in \mathfrak{M}(\Rn): \|f\|_{\Lambda_{p,\psi}} :=\|\psi f^{*}\|_{p,(0,\infty)}\}.
\end{equation*}
\end{defn}
Therefore the following statement
$$ \Lambda_{p,t^{\frac{1}{p}-\frac{1}{q}}}(\Rn) = L_{p,q}(\Rn)$$
is valid.


The spaces $\Lambda^{p}(w) \equiv \Lambda_{p,w^{1/p}}$ were introduced by Lorentz in 1951 in \cite{Lorentz2}. Spaces whose norms involve $f^{**}$ appeared explicitly for the first time in Calderon's paper \cite{Calderon1964}. In \cite{Sawyer1990} Sawyer
give description of the dual of $\Lambda^{p}(w)$.

Lorentz \cite{Lorentz2} proved that, for $p \ge 1$, $\|f\|_{\Lambda^{p}(w)}$ is a norm if and only if $w$ is nonincreasing. The class of weights for which $\|f\|_{\Lambda^{p}(w)}$ is merely equivalent to a Banach
norm is however considerably larger. In fact it consists of all those weights w which, for some $C$ and all $t > 0$, satisfy
\begin{align*}
t^p \int_t^{\infty} x^{-p} w(x) dx \le C \int_0^t  w(x) dx
~~~~ \mbox{when} ~~~~ p \in (1,\infty)
\end{align*}
(\cite{Sawyer1990}, Theorem 4], see also \cite{ArinoMuck1990}), or
\begin{align*}
\frac{1}{t} \int_0^t w(x) dx \le \frac{C}{s} \int_0^s  w(x) dx ~~~~ \mbox{for}~~  0 < s \le t ~~~~ \mbox{when} ~~ p=1
\end{align*}
(\cite{CGS1996}, Theorem 2.3).

In \cite{Haaker}, Theorem 1.1 (see also \cite{CarroS1993}, Corollary 2.2, \cite{KamMalig}, p. 6) it was observed that the functional $\|f\|_{\Lambda^{p}(w)}$; $0 < p \le \infty$, does not have to be a quasinorm. It was shown
that it is a quasinorm if and only if the function $W(t)=\int_0^t w(s)ds$
satisfies the $\Delta_2$-condition, i.e.,
\begin{align*}
W(2t) \le C W(t) ~~~~ \mbox{for some} ~~ C>1 ~~ t \in (0,\infty).
\end{align*}
In \cite{CKMP2004} were given necessary and sufficient conditions for $\Lambda^{p}(w)$ to be a linear space.
About historical developments classical Lorentz spaces see \cite{CPSS2001}.

\begin{defn} 
Let $0<p,q\leq\infty$ and let $\varphi,\psi\in \mathfrak{M}^{+}(0,\infty)$. We denote by $GL_{p,q;\varphi}(\Rn)$
the generalized Lorentz spaces, the spaces of all measurable functions with finite quasinorm
\begin{equation*}
GL_{p,q;\varphi}(\Rn):=\{f \in \mathfrak{M}(\Rn): \|f\|_{GL_{p,q;\varphi}}
:=\sup_{ r>0}\varphi(r) \|t^{\frac{1}{p}-\frac{1}{q}}f^{*}(t)\|_{q;(0,r)}\}.
\end{equation*}
\end{defn}

\begin{defn} 
Let $0<p\leq\infty$ and let $\varphi,\psi\in \mathfrak{M}^{+}(0,\infty)$. We denote by $G\Lambda_{p,\psi;\varphi}(\Rn)$ the generalized classical Lorentz spaces, the spaces of all measurable functions with finite quasinorm
\begin{equation*}
G\Lambda_{p,\psi;\varphi}(\Rn):=\{f \in \mathfrak{M}(\Rn): \|f\|_{G\Lambda_{p,\psi;\varphi}}
:=\sup_{ r>0}\varphi(r) \|\psi(\cdot)f^{*}(\cdot)\|_{p;(0,r)}\}.
\end{equation*}
\end{defn}

Therefore the following statement
$$
L_{p,q}=GL_{p,q;1}, ~~~~ L_{p;\varphi}=GL_{p,p;\varphi}, ~~~~ L_{p}=GL_{p,p;1},
$$
$$
\Lambda_{p,\psi}=G\Lambda_{p,\psi;1}, ~~~~ L_{p;\varphi}=G\Lambda_{p,1;\varphi}, ~~~~ L_{p}=G\Lambda_{p,1;1}
$$
is valid. Note that, the space
$$L_{p;\varphi}(\Rn)=\{ f \in \mathfrak{M}(\Rn): \, \|f\|_{L_{p;\varphi}} :=\sup_{r>0}\varphi(r) \|f^{*}\|_{p;(0,r)}<\infty \}$$
we called generalized Lebesgue spaces.

Recall the definition of generalized Marcinkiewicz space

\[ M_{p;\varphi}(\Rn)=\{ f \in \mathfrak{M}(\Rn): \,\|f\|_{M_{p,\varphi}}:=\sup_{E}\varphi(|E|) \|f\|_{p,E}<\infty \}, \]
where the supremum is taken for all measurable subset of $\Rn$.
It easy to see that

\[ \sup_{E}\varphi(|E|) \|f\|_{p,E}= \sup_{t>0}\varphi(t) \|f\|_{p,(0,t)}
\]
Therefore, we have $G\Lambda_{p,1;\varphi}(\Rn)=M_{p;\varphi}(\Rn)$ and $G\Lambda_{1,1;\varphi}(\Rn)=M_{\varphi}(\Rn)$.

\begin{rem} \label{geyd1}
Note that, the classical Lorentz space $\Lambda_{p,\psi}(\Rn)$ is not a linear space, therefore the generalized classical Lorentz spaces $G\Lambda_{p,\psi;\varphi}(\Rn)$
are also not linear spaces in general. It is easy to see that the condition
\begin{align*}
\int_0^{2t} \psi^{p}(x) dx \le C \int_0^t  \psi^{p}(x) dx ~~~~ \mbox{for some} ~~ C>1 ~~ t \in (0,\infty)
\end{align*}
is sufficient for the generalized classical Lorentz spaces $G\Lambda_{p,\psi;\varphi}(\Rn)$ to be a quasinorm space.

Also the condition
\begin{align*}
t^p \int_t^{\infty} x^{-p} \psi^{p}(x) dx \le C \int_0^t  \psi^{p}(x) dx
~~~~ \mbox{when} ~~~~ p \in (1,\infty)
\end{align*}
or
\begin{align*}
\frac{1}{t} \int_0^t \psi(x) dx \le \frac{C}{s} \int_0^s  \psi(x) dx ~~~~ \mbox{for}~~  0 < s \le t ~~~~ \mbox{when} ~~ p=1
\end{align*}
is sufficient for the generalized classical Lorentz spaces $G\Lambda_{p,\psi;\varphi}(\Rn)$ to be a norm space.
\end{rem}
\begin{rem}
The problems mentioned in the Remark \ref{geyd1} are not as easy as the problems in the case of the classical Lorentz spaces. At this point it's required to have the characterizations of the embedding between these generalized classical Lorentz spaces and the boundedness of the maximal operator in these spaces. We  haven't got the solution of these problems yet. To solve these problems it will be useful to find the characterizations of the associate spaces of the generalized classical Lorentz spaces. In this paper we give the characterizations of the associate spaces of these spaces.
Note that, in the special case $\varphi(t)=t^{-\frac{\lambda}{q}}$ the generalized Lorentz space $GL_{p,q;\varphi}(\Rn)$ was introduced and investigated in \cite{AykGulSer}.
\end{rem}

\begin{defn}
Let $X$ be a set of functions from $\mathfrak{M}(\mathcal{R},\mu)$
endowed with a positively homogenous functional $\|\cdot\|_{X}$ defined for every
$f\in \mathfrak{M}(\mathcal{R},\mu)$ and such that $f\in X$ if and only if $\|f\|_{X}<\infty$,
we define the associate space $X'$ of $X$ as the set of all functions $f\in \mathfrak{M}(\mathcal{R},\mu)$ such that
$\|f\|_{X'}<\infty$, where
\begin{equation*}
\|f\|_{X'}=\sup\{\int_{X}fgd\mu:\|g\|_{X}\leq 1\}
\end{equation*}
In what follows we assume $\mathcal{R}=\Rn$ and $d\mu=dx$.
\end{defn}

\begin{prop}\label{prp1}(\cite{BS}, p. 58) Let $\|\cdot\|$ be a rearrangement-invariant
function norm over a resonant measure space $(X,d\mu)$. Then the associate norm $\|\cdot\|_{X'}$
is also rearrangement invariant. Furthermore,
\begin{align*}
\|f\|_{X'}&=\sup\{\int_{X}fgd\mu:\|g\|_{X}\leq 1\}
\\
&=\sup\{\int_{0}^{\infty}f^{*}(t)g^{*}(t)dt:\|g\|_{X}\leq 1\}
\end{align*}
holds.
\end{prop}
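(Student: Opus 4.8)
The plan is to establish the displayed formula first, because the rearrangement invariance of $\|\cdot\|_{X'}$ will then fall out for free. Writing
\begin{align*}
A(f) &= \sup\Big\{ \textstyle\int_X fg \, d\mu : \|g\|_X \le 1 \Big\}, \\
B(f) &= \sup\Big\{ \textstyle\int_0^\infty f^*(t) g^*(t) \, dt : \|g\|_X \le 1 \Big\},
\end{align*}
so that $\|f\|_{X'} = A(f)$ by definition, the goal becomes $A(f) = B(f)$. Before starting I would reduce to nonnegative $f$ and $g$: since $\|\cdot\|_X$ is a rearrangement-invariant function norm it enjoys the lattice property $\||g|\|_X = \|g\|_X$, and replacing $g$ by $|g|\,\mathrm{sgn}(f)$ does not change $\|g\|_X$ while it only increases $\int_X fg$ (since $\int_X f\,(|g|\,\mathrm{sgn}f) = \int_X |f||g| \ge \int_X fg$). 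As $f^*$ depends only on $|f|$, both suprema are therefore unchanged under this restriction.

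For the inequality $A(f) \le B(f)$ I would simply invoke the Hardy--Littlewood inequality
\[
\int_X fg \, d\mu \le \int_0^\infty f^*(t) g^*(t) \, dt,
\]
valid for all nonnegative measurable $f,g$. Taking the supremum over $\|g\|_X \le 1$ on both sides yields $A(f) \le B(f)$ at once.

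The reverse inequality $B(f) \le A(f)$ is the crux, and it is exactly here that the hypothesis that $(X,d\mu)$ is \emph{resonant} is indispensable. Fixing $g$ with $\|g\|_X \le 1$, the defining property of a resonant measure space gives
\[
\int_0^\infty f^*(t) g^*(t) \, dt = \sup_{\tilde g}\; \int_X f\,\tilde g \, d\mu,
\]
where the supremum runs over all $\tilde g$ equimeasurable with $g$ (that is, $\tilde g^{\,*} = g^*$). For each such $\tilde g$, rearrangement invariance of $\|\cdot\|_X$ forces $\|\tilde g\|_X = \|g\|_X \le 1$, whence $\int_X f\,\tilde g\, d\mu \le A(f)$; passing to the supremum over $\tilde g$ shows $\int_0^\infty f^* g^* \, dt \le A(f)$, and the supremum over all admissible $g$ then gives $B(f) \le A(f)$. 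Together with the previous step this proves $A(f) = B(f)$.

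Finally, the rearrangement invariance of $\|\cdot\|_{X'}$ is immediate: the quantity $B(f)$ depends on $f$ only through $f^*$, so $f_1^* = f_2^*$ implies $\|f_1\|_{X'} = B(f_1) = B(f_2) = \|f_2\|_{X'}$. (That $\|\cdot\|_{X'}$ is itself a function norm is the standard fact that the associate of any function norm is again a function norm.) The step I expect to be the main obstacle is precisely the reverse inequality: it rests entirely on resonance, and the delicate point is justifying the passage from the integral over $X$ to the one–dimensional integral $\int_0^\infty f^* g^*$ via equimeasurable rearrangements of $g$ — which is exactly the property that the resonance hypothesis is designed to supply.
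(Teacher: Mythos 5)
Your argument is correct and is exactly the standard proof of this result: the paper itself gives no proof, only the citation to \cite{BS}, and your two-step argument (Hardy--Littlewood inequality for $A(f)\le B(f)$, the defining property of a resonant measure space for $B(f)\le A(f)$, with rearrangement invariance of the associate norm following because $B(f)$ depends on $f$ only through $f^*$) is precisely the argument given in that reference. No gaps.
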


Throughout the paper, we write $A \lesssim B$ if there exists a positive constant
$C$, independent of appropriate quantities such as functions, satisfying $A \leq CB$.
We write $A \approx B$ when $A \lesssim B$ and $B \lesssim A$.

\section{Reverse Hardy Inequality}
Let us recall some results from \cite{Gog}.


\

Let $u,v$ and $w$ will denote weights, that is,
locally integrable non-negative functions on $(0,\infty)$. We set, once and for all
\begin{equation*}
U(t)=\int_{0}^{t}u(s)ds, V(t)=\int_{0}^{t}v(s)ds, W(t)=\int_{0}^{t}w(s)ds.
\end{equation*}

We assume that $U(t)>0$ for every $t\in(0,\infty)$. We then denote
\begin{equation*}
f_{u}^{**}(t)=\frac{1}{U(t)}\int_{0}^{t}f^{*}(s)u(s)ds, t\in(0,\infty).
\end{equation*}
When $u\equiv 1$ (hence $U(t) = t$), we will omit the subscript u.

\

Furthermore, for given $q\in(0,\infty)$ and every $f\in \mathfrak{M}(\mathcal{R},\mu)$,
the necessary and sufficient conditions for the inequality
\begin{equation}\label{cay1}
\left(\int_{0}^{\infty}f^{*}(t)^{q}w(t)dt\right)^{\frac{1}{q}}
\lesssim ess\sup_{t\in(0,\infty)}f_{u}^{**}(t)v(t)
\end{equation}
were established in \cite{Gog}.

\

\begin{defn}

Let $\theta$ be a continuous strictly increasing function on $[0,\infty)$ such that
$\theta(0)=0$ and $\lim_{t\rightarrow \infty}\theta(t)=\infty$. Then we say $\theta$ is admissible.

\

Let $\theta$ be an admissible function. We say that a function $h$ is $\theta$-quasiconcave if $h$
is equivalent to a non-decreasing function on $[0,\infty)$ and $\frac{h}{\theta}$ is equivalent
to a non-increasing function on $(0,\infty)$.
We say that a $\theta$-quasiconcave function $h$ is non-degenerate if
\begin{equation*}
\lim_{t\rightarrow 0^{+}}h(t)=\lim_{t\rightarrow \infty}\frac{1}{h(t)}
=\lim_{t\rightarrow \infty}\frac{h(t)}{\theta(t)}=\lim_{t\rightarrow 0^{+}}\frac{\theta(t)}{h(t)}=0.
\end{equation*}
The family of non-degenerate $\theta$-quasiconcave functions will be denoted by $\Omega _{\theta}$.
\end{defn}
\begin{lem}Let $u,v$ be weights above and let $\sigma$ defined by
\begin{equation}\label{cay4}
\sigma(t):=ess\sup_{s\in (0,t)}U(s)ess\sup_{\tau\in (s,\infty)}
\frac{v(\tau)}{U(\tau)},t\in (0,\infty)
\end{equation}
then,
\begin{equation*}
\sigma(t)\approx ess\sup_{s\in (0,\infty)}v(s)\frac{U(t)}{U(s)+U(t)}.
\end{equation*}
\end{lem}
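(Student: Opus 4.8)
The plan is to pin both sides of the claimed equivalence to one intermediate quantity. Write $E(s):=\es_{\tau\in(s,\infty)}\frac{v(\tau)}{U(\tau)}$, so that $\sigma(t)=\es_{s\in(0,t)}U(s)E(s)$. Recall that $U$ is continuous and non-decreasing with $U>0$ on $(0,\infty)$, while $E$ is non-increasing. I would introduce $T_1(t):=\es_{s\in(0,t)}v(s)$ and $T_2(t):=U(t)E(t)$ and prove $\sigma(t)\approx\max\{T_1(t),T_2(t)\}$ and also that the right-hand side of the lemma is $\approx\max\{T_1(t),T_2(t)\}$; transitivity then closes the argument.

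First I would dispose of the right-hand side. Since $\max\{U(s),U(t)\}\le U(s)+U(t)\le 2\max\{U(s),U(t)\}$, one has $\frac{U(t)}{U(s)+U(t)}\approx\min\{1,U(t)/U(s)\}$ with absolute constants. Splitting the outer essential supremum at $s=t$ and using monotonicity of $U$ (so $U(s)\le U(t)$ for $s\le t$ and $U(s)\ge U(t)$ for $s\ge t$) gives
\begin{equation*}
\es_{s\in(0,\infty)}v(s)\frac{U(t)}{U(s)+U(t)}\approx\max\Big\{\es_{s\in(0,t)}v(s),\ U(t)\es_{s\in(t,\infty)}\tfrac{v(s)}{U(s)}\Big\}=\max\{T_1(t),T_2(t)\}.
\end{equation*}
For the upper bound $\sigma(t)\lesssim\max\{T_1,T_2\}$ I would fix $s\in(0,t)$ and break the inner supremum defining $E(s)$ at $\tau=t$: on $(s,t]$ the factor $U(s)\le U(\tau)$ yields $U(s)\frac{v(\tau)}{U(\tau)}\le v(\tau)\le T_1$, while on $(t,\infty)$ the bounds $U(s)\le U(t)$ and $\frac{v(\tau)}{U(\tau)}\le E(t)$ yield $U(s)\frac{v(\tau)}{U(\tau)}\le U(t)E(t)=T_2$; taking $\es$ over $s$ finishes this direction. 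The companion bound $\sigma(t)\gtrsim T_2$ is immediate, since $E(s)\ge E(t)$ for $s<t$, so letting $s\uparrow t$ and using continuity of $U$ gives $\sigma(t)\ge E(t)\,\es_{s\in(0,t)}U(s)=U(t)E(t)$.

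The delicate point, and where I expect the main obstacle, is $\sigma(t)\gtrsim T_1$, because $T_1$ measures $v$ itself while $E$ only sees the tail average $v/U$ to the right. I would argue through level sets rather than pointwise values. Fix $\lambda<T_1$ and set $F_\lambda:=\{s\in(0,t):v(s)>\lambda\}$, a set of positive measure, and let $s^{*}:=\operatorname{ess\,sup}F_\lambda\in(0,t]$. For almost every $s'<s^{*}$ the set $F_\lambda\cap(s',s^{*})$ still has positive measure, and for $\tau$ there $\frac{v(\tau)}{U(\tau)}>\frac{\lambda}{U(\tau)}\ge\frac{\lambda}{U(s^{*})}$ since $U$ is non-decreasing; hence $E(s')\ge\lambda/U(s^{*})$. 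Therefore
\begin{equation*}
\sigma(t)\ge\es_{s'\in(0,s^{*})}U(s')E(s')\ge\frac{\lambda}{U(s^{*})}\,\es_{s'\in(0,s^{*})}U(s')=\frac{\lambda}{U(s^{*})}\,U(s^{*})=\lambda,
\end{equation*}
using positivity and continuity of $U$ in the last two equalities; letting $\lambda\uparrow T_1$ gives $\sigma(t)\ge T_1$.

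Combining the four estimates yields $\sigma(t)\approx\max\{T_1(t),T_2(t)\}\approx\es_{s\in(0,\infty)}v(s)\frac{U(t)}{U(s)+U(t)}$, which is the assertion. The part to watch is entirely measure-theoretic: the nested essential suprema must be manipulated through sets of positive measure, and the identity $\es_{s'\in(0,s^{*})}U(s')=U(s^{*})$ rests on $U$ being continuous and strictly positive on $(0,\infty)$, exactly the standing hypothesis $U(t)>0$.
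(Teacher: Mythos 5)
Your proof is correct. Note, though, that the paper itself offers no proof of this lemma to compare against: it is recalled verbatim from the cited reference of Gogatishvili and Pick, so your argument is a genuine (and welcome) filling-in. Your route is the natural one: both sides are identified with $\max\{T_1(t),T_2(t)\}$, where $T_1(t)=\es_{s\in(0,t)}v(s)$ and $T_2(t)=U(t)\es_{\tau\in(t,\infty)}v(\tau)/U(\tau)$. The reduction of the right-hand side via $U(s)+U(t)\approx\max\{U(s),U(t)\}$ and the split of the outer supremum at $s=t$ are exactly as they should be; the upper bound $\sigma(t)\le\max\{T_1,T_2\}$ and the lower bound $\sigma(t)\ge T_2$ are clean. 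The only genuinely delicate step is $\sigma(t)\ge T_1$, and your level-set argument handles it correctly: for $\lambda<T_1$ the set $F_\lambda=\{s\in(0,t):v(s)>\lambda\}$ has positive measure, and with $s^{*}$ its essential supremum one gets $E(s')\ge\lambda/U(s^{*})$ for every $s'<s^{*}$ because $F_\lambda\cap(s',s^{*}]$ has positive measure and $U(\tau)\le U(s^{*})$ there; combined with $\es_{s'\in(0,s^{*})}U(s')=U(s^{*})$ (which uses only that $U$, being the integral of a locally integrable weight, is continuous and non-decreasing, and that $U>0$ on $(0,\infty)$) this yields $\sigma(t)\ge\lambda$. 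All hypotheses you invoke are available from the standing assumptions of Section 3, so the argument stands as written.
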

\begin{defn}
Let $\sigma$ be an admissible function and let $\nu$ be a non-negative Borel measure on $[0,\infty)$.
We say that the function $h$ defined as
\begin{equation*}
h(t):=\sigma(t)\int_{[0,\infty)}\frac{d\nu(s)}{\sigma(s)+\sigma(t)}, t\in(0,\infty),
\end{equation*}
fundamental function of the measure $\nu$ with respect to $\sigma$. We will also say that the function $\nu$
is a representation measure of $h$ with respect to $\sigma$.

\

We say that $\nu$ is non-degenerate if the following conditions are satisfied for every $t\in(0,\infty)$:
\begin{equation*}
\int_{[0,\infty)}\frac{d\nu(s)}{\sigma(s)+\sigma(t)}<\infty,
\int_{[0,1]}\frac{d\nu(s)}{\sigma(s)}= \int_{[1,\infty)}d\nu(s)=\infty.
\end{equation*}
\end{defn}

\begin{rem}
Let $\sigma$ be an admissible function and let $\nu$ be a non-negative non-degenerate Borel measure on $[0,\infty)$.
Let $h$ be the fundamental function of $\nu$ with respect to $\sigma$. Then
\begin{equation*}
h(t)\approx\int_{0}^{t}\int_{[s,\infty)}\frac{d\nu(y)}{\sigma(y)}d\sigma(s), t\in(0,\infty),
\end{equation*}
and also
\begin{equation*}
h(t)\approx \int_{[0,t]}d\nu(s)+\sigma(t)\int_{[t,\infty]}\frac{d\nu(s)}{\sigma(s)}, t\in(0,\infty).
\end{equation*}
\end{rem}

\begin{thm}\label{mak1} \cite{Gog}
Let $q\in (0,\infty)$ and let $u,v,w$ be weights.
Assume that $u$ is such that $U$ is admissible. Let $\sigma$
defined by \eqref{cay4}, be non-degenerate with respect to $U$.
Let $\nu$ be the representation measure of $U^{q}/\sigma^{q}$ with respect to
$U^{q}$.

\

(i) If $1\leq q < \infty$, then \eqref{cay1} holds for all $f$ if and only if
\begin{equation*}
A(1)=\left(\int_{0}^{\infty}\sup_{s\in(t,\infty)}\frac{W(s)}{U(s)^{q}}d\nu(t)\right)^{\frac{1}{q}}<\infty.
\end{equation*}
Moreover, the optimal constant $C$ in \eqref{cay1} satisfies $C\approx A(1)$.

\

(ii) If $0<q<1$, then \eqref{cay1} holds for all $f$ if and only if
\begin{equation*}
A(2)=\left(\int_{0}^{\infty}\frac{\zeta(t)}{U(t)^{q}}d\nu(t)\right)^{\frac{1}{q}}<\infty,
\end{equation*}
where
\begin{equation*}
\zeta(t)=W(t)+U(t)^{q}\left(\int_{t}^{\infty}\left(\frac{W(s)}{U(s)}\right)^{\frac{q}{1-q}}w(s)ds\right)^{1-q}, ~~ t\in (0,\infty).
\end{equation*}
Moreover, the optimal constant in \eqref{cay1} satisfies $C\approx A(2)$.
\end{thm}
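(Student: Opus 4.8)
The plan is to treat \eqref{cay1} as a statement purely about non-increasing functions and then to convert it into a tractable extremal problem. First I would observe that both sides of \eqref{cay1} depend on $f$ only through its decreasing rearrangement, so I may replace $f^{*}$ by an arbitrary non-negative non-increasing $g$ and rewrite the inequality as
\begin{equation*}
\left(\int_{0}^{\infty} g(t)^{q} w(t)\,dt\right)^{1/q} \lesssim \es_{t\in(0,\infty)} v(t)\,\frac{1}{U(t)}\int_{0}^{t} g(s)u(s)\,ds ,\qquad g\downarrow .
\end{equation*}
A direct differentiation shows that the averaged quantity $\frac{1}{U(t)}\int_{0}^{t} gu$ is itself non-increasing in $t$; this monotonicity, inherited from $g\downarrow$, is what makes a \emph{reverse} estimate possible at all.

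The next step is to regularize the weight $v$ on the right-hand side. Using \eqref{cay4} and the preceding Lemma, which identifies $\sigma(t)\approx \es_{s} v(s)\,U(t)/(U(s)+U(t))$, I would prove that for non-increasing $g$ the right-hand side is equivalent to $\es_{t} \frac{\sigma(t)}{U(t)}\int_{0}^{t} gu$. The inequality $\sigma\gtrsim v$ gives one direction at once; the reverse direction splits on whether the auxiliary point lies before or after $t$, and in the delicate case one uses precisely the monotonicity of $\frac1{U}\int_{0}^{\cdot}gu$ established above. Normalizing the regularized right-hand side to be $\le 1$ then turns \eqref{cay1} into the problem of computing
\begin{equation*}
\sup\left\{\int_{0}^{\infty} g^{q} w : g\downarrow,\ \int_{0}^{t} gu \le \frac{U(t)}{\sigma(t)}\ \text{for all }t\right\},
\end{equation*}
and \eqref{cay1} holds if and only if this supremum is finite.

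To evaluate the supremum I would pass to the variable $x=U(t)$ and write $G(x)=\int_{0}^{t} gu$; then $G$ is non-decreasing and concave, the constraint reads $G(x)\le x/\sigma$, and the target becomes $\int (G')^{q}\,w\,dt$. The decomposition $1/\sigma(t)^{q}=\int_{[0,\infty)} d\nu(s)/(U(s)^{q}+U(t)^{q})$ furnished by the representation measure $\nu$ of $U^{q}/\sigma^{q}$ with respect to $U^{q}$ — equivalently the fundamental-function identities of the preceding Remark — is exactly the device that lets me break this constrained concave optimization into elementary pieces indexed by $\nu$ and then reassemble (anti-discretize) the outcome. For $1\le q<\infty$ the constraint set is convex and $t\mapsto t^{q}$ is convex, so a duality/linear-programming argument over the admissible $G$ yields the supremum form $A(1)$. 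For $0<q<1$ the extremal $g$ is instead governed by the interplay between $W$ and a tail integral, and this is where the corrective term $U(t)^{q}\big(\int_{t}^{\infty}(W/U)^{q/(1-q)} w\big)^{1-q}$ defining $\zeta$ enters, producing $A(2)$.

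The main obstacle is the case $0<q<1$: there $L^{q}(w)$ is not normable, the clean convex-duality route behind $A(1)$ is unavailable, and one must compute the extremal non-increasing $g$ explicitly and control the non-linear functional $\zeta$ by hand. A secondary difficulty, present for every $q$, is the anti-discretization step: one must verify that the discrete conditions extracted from $\nu$ are equivalent, with constants independent of $g$, to the integral conditions $A(1)$ and $A(2)$, and this is where the assumed non-degeneracy of $\sigma$ and of $\nu$ is indispensable.
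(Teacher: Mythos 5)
First, a point of comparison: the paper does not prove Theorem \ref{mak1} at all --- it is imported verbatim from \cite{Gog}, so there is no in-paper argument to measure your proposal against. Judged against the actual proof in that source, your outline identifies the right skeleton: reduction to non-increasing $g$, the observation that $g_{u}^{**}$ is non-increasing (your differentiation argument for this is correct), the replacement of $v$ by its least $U$-quasiconcave majorant $\sigma$ from \eqref{cay4}, and the use of the representation measure $\nu$ via the identity $\sigma(t)^{-q}=\int_{[0,\infty)}\frac{d\nu(s)}{U(s)^{q}+U(t)^{q}}$ to discretize the resulting constrained extremal problem. These preliminary reductions are sound as stated.

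However, as a proof the proposal stops exactly where the theorem begins. Everything that actually produces $A(1)$ and $A(2)$ is asserted rather than argued. (a) The evaluation of $\sup\{\int_{0}^{\infty} g^{q}w:\ g\downarrow,\ \int_{0}^{t}gu\le U(t)/\sigma(t)\ \forall t\}$ \emph{is} the content of the theorem, and ``a duality/linear-programming argument'' for $q\ge 1$ and ``compute the extremal $g$ explicitly'' for $q<1$ are placeholders, not steps: nothing in the outline explains why the answer for $q\ge 1$ involves the inner supremum $\sup_{s\in(t,\infty)}W(s)/U(s)^{q}$ rather than the pointwise quantity $W(t)/U(t)^{q}$, nor where the exponent $\frac{q}{1-q}$ and the two-term structure of $\zeta$ come from when $q<1$. (b) The anti-discretization step --- passing from conditions at the points selected by $\nu$ back to the integral conditions $A(1)$, $A(2)$ with constants uniform in $g$ --- is the technical heart of \cite{Gog}, and you flag it as a ``difficulty'' rather than address it; the non-degeneracy hypotheses are used precisely there. (c) The necessity direction (constructing test functions showing that \eqref{cay1} forces $A(i)<\infty$) is not mentioned at all, even though the statement is an equivalence. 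So what you have is a faithful roadmap of the known proof, not a proof: the two decisive computations and the converse implication are missing.
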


\section{Associated spaces of generalized classical Lorentz spaces $G\Lambda_{p,\psi;\varphi}$}

The associated spaces of classical Lorentz spaces $\Lambda_{p,\psi}$ was calculated in \cite{MarCarRap}.
\begin{thm}
Let $0 < p <\infty$, $\psi \in M^{+}(0,\infty)$. Then the associate spaces of $\Lambda_{p,\psi}$ are described as follows: \\
(i) If $0<p \le 1$, then
$$
\|f\|_{(\Lambda_{p,\psi})'} = \sup\limits_{t>0} \frac{tf^{**}(t)}{\Psi_p(t)},
$$
where $\Psi_p(t)=\|\Psi\|_{p,(0,t)}$. \\
(ii) If $1<p < \infty$, then
$$
\|f\|_{(\Lambda_{p,\psi})'} = \int_{0}^{\infty} \left(\frac{tf^{**}(t)}{\Psi_p(t)^p}\right)^{p'} \psi^p(t)dt.
$$
\end{thm}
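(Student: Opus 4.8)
The plan is to compute the associate norm directly from its definition, reducing first to a supremum over the cone of non-increasing functions and then treating the two ranges of $p$ by entirely different analytic devices.

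First I would apply Proposition \ref{prp1} together with the Hardy--Littlewood--P\'olya inequality $\int_{\Rn}fg\,dx\le\int_0^\infty f^*g^*\,dt$ and the resonance of $(\Rn,dx)$. Since $\|g\|_{\Lambda_{p,\psi}}$ depends on $g$ only through $g^*$, and since on a non-atomic space of infinite measure $g^*$ ranges over all non-negative non-increasing functions, this turns the problem into
\begin{equation*}
\|f\|_{(\Lambda_{p,\psi})'}=\sup\left\{\int_0^\infty f^*(t)\phi(t)\,dt:\ \phi\downarrow,\ \Big(\int_0^\infty\psi(t)^p\phi(t)^p\,dt\Big)^{1/p}\le1\right\}.
\end{equation*}
Writing $\Psi_p(t)=\big(\int_0^t\psi^p\big)^{1/p}$ and $\int_0^t f^*=tf^{**}(t)$, the lower bounds come from admissible test functions: taking $\phi=\chi_{(0,t)}$ gives $\int_0^\infty f^*\phi=tf^{**}(t)$ and $(\int\psi^p\phi^p)^{1/p}=\Psi_p(t)$, so $\|f\|_{(\Lambda_{p,\psi})'}\ge\sup_{t>0}tf^{**}(t)/\Psi_p(t)$.

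For part (i), with $0<p\le1$, this single test function already yields the sharp lower bound, so only the reverse inequality remains. I would integrate by parts (boundary terms vanishing for admissible $\phi$): with $F(t)=\int_0^t f^*$ and $\mu=-d\phi\ge0$ one gets $\int_0^\infty f^*\phi=\int_{[0,\infty)}F(s)\,d\mu(s)\le B\int_{[0,\infty)}\Psi_p(s)\,d\mu(s)$, where $B=\sup_{t>0}tf^{**}(t)/\Psi_p(t)$. The crux is then $\int_{[0,\infty)}\Psi_p(s)\,d\mu(s)\le\big(\int_0^\infty\psi^p\phi^p\big)^{1/p}$. Since $\phi(t)=\mu([t,\infty))=\int_{[0,\infty)}\chi_{\{t\le s\}}\,d\mu(s)$, I would write $\psi(t)^p\phi(t)^p=\big(\int_{[0,\infty)}\psi(t)\chi_{\{t\le s\}}\,d\mu(s)\big)^p$ and apply Minkowski's integral inequality in its reverse form, valid precisely because $0<p\le1$; the inner computation $\int_0^\infty\psi(t)^p\chi_{\{t\le s\}}\,dt=\Psi_p(s)^p$ then delivers exactly the desired bound, giving the stated sup-formula.

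For part (ii), with $1<p<\infty$, the argument above breaks down, since for $p>1$ Minkowski's inequality runs in the opposite direction and only bounds the ratio from below; this is the main obstacle, and it is exactly where the weighted reverse Hardy inequality of Theorem \ref{mak1} (equivalently, Sawyer's duality principle for the cone of non-increasing functions) is needed. I would invoke that characterization with $w=\psi^p$, $W(t)=\Psi_p(t)^p$, and numerator measure $f^*\,dt$ to pass from the restricted supremum over decreasing $\phi$ to an unrestricted one, obtaining
\begin{equation*}
\sup_{\phi\downarrow}\frac{\int_0^\infty f^*\phi}{\big(\int_0^\infty\psi^p\phi^p\big)^{1/p}}\approx\left(\int_0^\infty\Big(\frac{tf^{**}(t)}{\Psi_p(t)^p}\Big)^{p'}\psi^p(t)\,dt\right)^{1/p'},
\end{equation*}
the near-optimiser being the decreasing function built from $\big(tf^{**}(t)/\Psi_p(t)^p\big)^{p'-1}$ that realises equality in the underlying H\"older/Hardy estimate. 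I would note that the formula stated in (ii) must carry the outer exponent $1/p'$ (otherwise the two sides are not homogeneous of the same degree in $f$), and that the boundary term $\int_0^\infty f^*/\Psi_p(\infty)$ produced by the duality principle vanishes under the natural non-degeneracy $\Psi_p(\infty)=\infty$. The delicate points are checking the admissibility and non-degeneracy hypotheses of Theorem \ref{mak1} for the relevant $\sigma$ and representation measure, and tracking constants carefully enough to upgrade the equivalence $\approx$ to the asserted equality.
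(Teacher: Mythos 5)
The paper does not actually prove this theorem: it is quoted from \cite{MarCarRap} and used as a known input, so there is no in-paper argument to compare yours against. Your outline is the standard proof from that reference and is essentially sound. The reduction via Proposition \ref{prp1} to a supremum over non-increasing $\phi$ is legitimate because $\|\cdot\|_{\Lambda_{p,\psi}}$ depends only on $g^{*}$ and $(\Rn,dx)$ is resonant, non-atomic and of infinite measure; the test functions $\phi=\chi_{(0,t)}$ give the lower bounds; and for $0<p\le1$ the reverse Minkowski integral inequality applied to $\phi(t)=\mu([t,\infty))$ does give $\int_{[0,\infty)}\Psi_p(s)\,d\mu(s)\le\big(\int_0^\infty\psi^p\phi^p\big)^{1/p}$, which closes part (i) with genuine equality. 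For part (ii) you correctly identify that the mechanism is Sawyer's duality principle (equivalently the weighted Hardy machinery behind Theorem \ref{mak1}); note that this yields only an equivalence with constants, so the theorem's ``$=$'' should be read as ``$\approx$'', and you are right that the displayed formula in (ii) is missing the outer exponent $1/p'$ (as written the two sides have different homogeneity in $f$) and, when $\Psi_p(\infty)<\infty$, the additional term $\int_0^\infty f^{*}/\Psi_p(\infty)$. The only loose ends in your sketch are minor: in (i) you should allow $\mu$ an atom at infinity so that non-vanishing $\phi$ are covered (this is exactly what produces the limiting value as $t\to\infty$ in the supremum), and in (ii) the claimed near-optimiser must be arranged to be non-increasing, which is where Sawyer's construction (or the representation-measure argument of Theorem \ref{mak1}) does the real work. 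For comparison, the paper's proof of its generalized version (Theorem \ref{abdfk}) proceeds in a different spirit: it substitutes $g^{*}=(h^{*})^{1/p}$ and applies the reverse Hardy inequality of Theorem \ref{mak1} with $q=1/p$, rather than invoking Sawyer duality directly.
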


In this section by using results of previous section we calculate the associated spaces
of generalized classical Lorentz spaces.
\begin{thm} \label{abdfk}
Let $0 < p <\infty$, $\psi \in M^{+}(0,\infty)$, $\varphi\in M^{+}(0,\infty,\downarrow)$ and $\varphi(r)r^{\frac{1}{p}}\in M^{+}(0,\infty,\uparrow)$. Then
the associate spaces of $G\Lambda_{p,\psi;\varphi}$ are described as follows: \\
(i) If $0<p \le 1$, then
$$
\|f\|_{(G\Lambda_{p,\psi;\varphi})'} = \int_{0}^{\infty}\sup\limits_{s\in (t,\infty)}\frac{sf^{**}(s)}{\Psi_p(s)}
d\nu(t),
$$
where $\nu$ is the representation measure of $\frac{1}{\varphi(t)}$ with respect to $\|\psi\|_{p,(0,t)}$. \\
(ii) If $1<p < \infty$, then
$$
\|f\|_{(G\Lambda_{p,\psi;\varphi})'} = \int_{0}^{\infty}\left(\int_{t}^{\infty}\left(\frac{sf^{**}(s)}{\Psi_p(s)}\right)^{p'} \psi^p(s)ds\right)^{1/p'}d\nu(t),
$$
where $\nu$ is the representation measure of $\frac{1}{\varphi(t)}$ with respect to $\Psi_p(t)$.
\end{thm}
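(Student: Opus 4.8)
The plan is to start from Proposition \ref{prp1}. Since the functional $\|\cdot\|_{G\Lambda_{p,\psi;\varphi}}$ is rearrangement invariant and Lebesgue measure on $\Rn$ is resonant, the associate norm can be computed on the cone of non-increasing functions: writing $h=g^{*}$ and
\[
D(h):=\sup_{r>0}\varphi(r)\Big(\int_{0}^{r}\psi^{p}h^{p}\Big)^{1/p},\qquad N(h):=\int_{0}^{\infty}f^{*}(t)h(t)\,dt,
\]
we get $\|f\|_{(G\Lambda_{p,\psi;\varphi})'}=\sup_{h}N(h)/D(h)$, the supremum taken over all $h\in\mathfrak{M}^{+}(0,\infty;\uparrow)$ reflected to be non-increasing. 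Equivalently, $\|f\|_{(G\Lambda_{p,\psi;\varphi})'}$ is the least constant $C$ for which $N(h)\le C\,D(h)$ holds for every non-increasing $h$. This is an inequality of the reverse Hardy type \eqref{cay1}, and the whole proof consists in matching it to Theorem \ref{mak1}.

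To bring the denominator into the averaged form $\es_{t}F_{u}^{**}(t)v(t)$ of \eqref{cay1}, I would substitute $F=h^{p}$ and take $u=\psi^{p}$, so that $U(t)=\int_{0}^{t}\psi^{p}=\Psi_{p}(t)^{p}$ and $F_{u}^{**}(t)=\Psi_{p}(t)^{-p}\int_{0}^{t}\psi^{p}h^{p}$. With $v=(\varphi\Psi_{p})^{p}$ one gets $\es_{t}F_{u}^{**}(t)v(t)=D(h)^{p}$. Taking $q=1/p$ and $w=f^{*}$, the left-hand side of \eqref{cay1} becomes $\big(\int_{0}^{\infty}F^{q}w\big)^{1/q}=\big(\int_{0}^{\infty}f^{*}h\big)^{p}=N(h)^{p}$, so that \eqref{cay1} reads $N(h)^{p}\le C\,D(h)^{p}$; consequently $\|f\|_{(G\Lambda_{p,\psi;\varphi})'}=C^{1/p}$, where $C$ is the optimal constant furnished by Theorem \ref{mak1} for these weights. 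Here $W(t)=\int_{0}^{t}f^{*}=tf^{**}(t)$.

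Next I would identify the measure $\nu$. Computing $\sigma$ from \eqref{cay4}: since $v/U=\varphi^{p}$ and $\varphi$ is non-increasing, $\es_{\tau>s}v(\tau)/U(\tau)=\varphi(s)^{p}$, and because the hypotheses on $\varphi$ (with the monotonicity of $\Psi_{p}$) make $\varphi\Psi_{p}$ equivalent to a non-decreasing function, $\sigma(t)=\es_{s<t}U(s)\varphi(s)^{p}=(\varphi(t)\Psi_{p}(t))^{p}$. Hence $U^{q}=\Psi_{p}$, $\sigma^{q}=\varphi\Psi_{p}$ and $U^{q}/\sigma^{q}=1/\varphi$, so the representation measure $\nu$ of $U^{q}/\sigma^{q}$ with respect to $U^{q}$ appearing in Theorem \ref{mak1} is exactly the representation measure of $1/\varphi$ with respect to $\Psi_{p}$, as in the statement. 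At this point one must also verify, from the hypotheses on $\varphi$, that $\sigma$ is non-degenerate with respect to $U$ and that $\nu$ is non-degenerate, which is what licenses the application of Theorem \ref{mak1}.

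Finally I would invoke Theorem \ref{mak1}. For $0<p\le1$ we have $q=1/p\ge1$, so part (i) applies: since $U(s)^{q}=\Psi_{p}(s)$ and $W(s)=sf^{**}(s)$, one has $A(1)=\big(\int_{0}^{\infty}\sup_{s>t}\frac{sf^{**}(s)}{\Psi_{p}(s)}\,d\nu(t)\big)^{1/q}$, and $\|f\|_{(G\Lambda_{p,\psi;\varphi})'}=A(1)^{1/p}$ collapses (because $1/q=p$) to precisely the formula in (i). For $1<p<\infty$ we have $q=1/p<1$, so part (ii) applies with the same weights; substituting $U^{q}=\Psi_{p}$, $W=tf^{**}$ and $w=f^{*}$ into $\zeta$ and $A(2)$ should yield the stated integral formula. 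I expect this last simplification — reorganising $\zeta(t)/U(t)^{q}$ and the inner integral $\int_{t}^{\infty}(W/U)^{q/(1-q)}w$ into the displayed expression involving $(sf^{**}/\Psi_{p})^{p'}\psi^{p}$ — to be the main obstacle, since it requires careful bookkeeping of the exponents $q=1/p$, $1-q=1/p'$ and of the interplay between $f^{*}$, $f^{**}$ and $\Psi_{p}$; the case $0<p\le1$, by contrast, falls out immediately once $\nu$ is identified.
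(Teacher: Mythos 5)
Your setup reproduces the paper's own argument step for step: the reduction via Proposition \ref{prp1} to an inequality over non-increasing $h$, the substitution $F=h^{p}$ (the paper writes $g^{*}=(h^{*})^{1/p}$), the choices $u=\psi^{p}$, $v=(\varphi\Psi_{p})^{p}$, $q=1/p$, $w=f^{*}$, and the identification of $\nu$ as the representation measure of $1/\varphi$ with respect to $\Psi_{p}$ (you in fact justify this last point via the computation of $\sigma$ more explicitly than the paper does). Case $0<p\le 1$ then follows exactly as you say, and your exponent bookkeeping ($C^{1/p}=A(1)^{1/p}=\int_{0}^{\infty}\sup_{s>t}\frac{sf^{**}(s)}{\Psi_{p}(s)}\,d\nu(t)$) is correct.

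The gap is in case $1<p<\infty$, precisely at the point you flag as "the main obstacle" and then do not carry out. Direct substitution into $A(2)$ does \emph{not} give the stated formula: it gives
\begin{equation*}
\int_{0}^{\infty}\frac{\zeta(t)}{\Psi_{p}(t)}\,d\nu(t),
\qquad
\zeta(t)=tf^{**}(t)+\Psi_{p}(t)\left(\int_{t}^{\infty}\left(\frac{sf^{**}(s)}{\Psi_{p}^{p}(s)}\right)^{\frac{1}{p-1}}f^{*}(s)\,ds\right)^{1/p'},
\end{equation*}
in which the inner integrand involves $f^{*}$ and the exponent $\frac{1}{p-1}$, not $\psi^{p}$ and the exponent $p'$. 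The substantive content of the paper's proof is the equivalence $\zeta(t)\approx\zeta_{1}(t):=\Psi_{p}(t)\bigl(\int_{t}^{\infty}(sf^{**}(s)/\Psi_{p}^{p}(s))^{p'}\psi^{p}(s)\,ds\bigr)^{1/p'}$, which is not mere bookkeeping. It requires two ideas: (a) an integration by parts using $d(sf^{**}(s))=f^{*}(s)\,ds$ and $d(\Psi_{p}^{p}(s))=\psi^{p}(s)\,ds$, which converts $\int_{t}^{\infty}(sf^{**}/\Psi_{p}^{p})^{1/(p-1)}f^{*}\,ds$ into $\frac{1}{p-1}\int_{t}^{\infty}(sf^{**}/\Psi_{p}^{p})^{p'}\psi^{p}\,ds$ minus the boundary term $(tf^{**}(t))^{p'}\Psi_{p}^{p}(t)^{-1/(p-1)}$; and (b) the absorption of the first summand $tf^{**}(t)$ into $\zeta_{1}(t)$, obtained by writing $\int_{t}^{\infty}\psi^{p}(s)\Psi_{p}(s)^{-pp'}\,ds\approx\Psi_{p}(t)^{-p'}$ and using that $s\mapsto sf^{**}(s)$ is non-decreasing to pull it inside the integral. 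Without (a) and (b) you have not shown that the expression produced by Theorem \ref{mak1} equals the one in the statement, so part (ii) remains unproved as written; everything else in your proposal is sound and coincides with the paper's route.
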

\begin{proof} From Proposition \ref{prp1} we have
\begin{align}\label{cay3}
\|f\|_{(G\Lambda_{p,\psi;\varphi})'}&:=\sup\limits_{g\geq 0}
\frac{\int_{0}^{\infty}f^{*}(t)g^{*}(t)dt}
{\sup\limits_{r>0}\varphi(r)\left(\int_{0}^{r}g^{*}(t)^{p}\psi^p(t)dt\right)^{1/p}}.
\end{align}
If we take $g^{*}(t)=h^{*}(t)^{\frac{1}{p}}$, then we can write
\begin{align*}
\|f\|_{(G\Lambda_{p,\psi;\varphi})'}=\left[\sup\limits_{h\geq 0}\frac{\left(\int_{0}^{\infty}f^{*}(t)h^{*}(t)^{\frac{1}{p}}dt\right)^{p}}
{\sup\limits_{r>0}\varphi(r)^{p}\int_{0}^{r}h^{*}(t)\psi^p(t)dt}\right]^{1/p}.
\end{align*}
If we define the function $u(t)=\psi^p(t)$, then we get
\begin{align*}
h_{u}^{**}(t)&=\frac{1}{\Psi_p^p(t)}\int_{0}^{t}h^{*}(s)\psi^p(s)ds.
\end{align*}
Therefore
\begin{align}\label{cny}
\left[\sup\limits_{h\geq 0}\frac{\left(\int_{0}^{\infty}f^{*}(t)h^{*}(t)^{\frac{1}{p}}dt\right)^{p}}
{\sup\limits_{r>0}\varphi(r)^{p}\int_{0}^{r}h^{*}(t)\psi^p(t)dt}\right]^{1/p}
&\approx\left[\sup\limits_{h\geq 0}\frac{\left(\int_{0}^{\infty}f^{*}(t)h^{*}(t)^{\frac{1}{p}}dt\right)^{p}}
{\sup\limits_{r>0}\varphi(r)^{p}\Psi_p^p(r)h_{u}^{**}(r)}\right]^{1/p}.
\end{align}

\

(i) Let $1<p<\infty$.

\

In Theorem \ref{mak1} if we take $q=\frac{1}{p}$, $w(t)=f^{*}(t)$, $U(t)=\Psi_p^p(t)$,
$v(t)=\varphi(t)^{p}\Psi_p^p(t)$ and $\nu$ be the representation measure of $\frac{1}{\varphi}$
with respect to $\Psi_p(t)$, which means
\begin{equation*}
\frac{1}{\varphi(t)}\approx\int_{0}^{t}d\nu(s)+\Psi_p(t)\int_{t}^{\infty}\frac{d\nu(s)}{\Psi_p(s)}.
\end{equation*}

Then we get

\begin{equation*}
RHS\eqref {cny}\approx \int_{0}^{\infty}\frac{\zeta(t)}{\Psi_p(t)}d\nu(t),
\end{equation*}
where
\begin{equation*}
\zeta(t)=tf^{**}(t)+\Psi_p(t)\left(\int_{t}^{\infty}
\left(\frac{sf^{**}(s)}{\Psi_p^p(s)} \right)^{\frac{1}{p-1}}f^{*}(s)ds\right)^{1/p'},~~ t\in (0,\infty).
\end{equation*}

\

Furthermore, we have
\begin{align} \label{gog11}
\zeta(t)&\approx \Psi_p(t)\left(\int_{t}^{\infty}
\left(\frac{sf^{**}(s)}{\Psi_p^p(s)} \right)^{p'}\psi^p(s)ds\right)^{1/p'}
 =: \zeta_{1}(t).
\end{align}
Clearly
\begin{align} \label{sdgk1}
tf^{**}(t)&=(p-1)^{\frac{1}{p'}}tf^{**}(t)\Psi_p(t)
\left(\int_{t}^{\infty}\frac{\psi^p(s)}{\Psi_p(s)^{pp'}}ds\right)^{1/p'} \notag
\\
& \lesssim \Psi_p(t)\left(\int_{t}^{\infty}
\left(\frac{sf^{**}(s)}{\Psi_p^p(s)}\right)^{p'}\psi^p(s)ds\right)^{1/p'}=\zeta_{1}(t).
\end{align}

Also by partial integration
\begin{align} \label{avfd1}
&\int_{t}^{\infty}
\left(\frac{sf^{**}(s)}{\Psi_p^p(s)} \right)^{\frac{1}{p-1}}f^{*}(s)ds  \notag
\\
&=\frac{(sf^{**}(s))^{p'}}{\Psi_p^p(s)^{\frac{1}{p-1}}}|_{t}^{\infty}
-\int_{t}^{\infty}(sf^{**}(s))^{p'}d(\Psi_p^p(s))^{-\frac{1}{p-1}} \notag
\\
&=-\left(tf^{**}(t)\right)^{p'}(\Psi_p^p(t))^{-\frac{1}{p-1}}+
\frac{1}{p-1}\int_{t}^{\infty}\left(\frac{sf^{**}(s)}{\Psi_p^p(s)}\right)^{p'}\psi^p(s)ds.
\end{align}
Then from \eqref{sdgk1} and \eqref{avfd1} we get
\begin{equation}\label{asde2}
\zeta(t) \lesssim \zeta_{1}(t).
\end{equation}

Furthermore, from \eqref{avfd1} we also have
\begin{align*}
\zeta_{1}(t)\leq \Psi_p(t)
\left(\Psi_p(t)^{-\frac{p}{p-1}}(tf^{**}(t))^{p'}
+\int_{t}^{\infty}
\left(\frac{sf^{**}(s)}{\Psi_p^p(s)}\right)^{\frac{1}{p-1}}f^{*}(s)ds\right)^{1/p'}=\zeta(t).
\end{align*}
Therefore together with \eqref{asde2} we get \eqref{gog11} and consequently we have
\begin{align*}
RHS\eqref {cny}\approx\int_{0}^{\infty}\left(\int_{t}^{\infty}\left(\frac{sf^{**}(s)}{\Psi_p^p(s)}\right)^{p'}
\psi^p(s)ds\right)^{1/p'}d\nu(t).
\end{align*}

\

(ii) Let $0<p\leq 1$.  If we take $q=\frac{1}{p}$ in Theorem \ref{mak1}, then $1\leq q <\infty$ and we obtain
\begin{align*}
RHS\eqref {cny}\approx \int_{0}^{\infty}\sup\limits_{s\in (t,\infty)}\frac{sf^{**}(s)}{\Psi_p(s)}d\nu(t).
\end{align*}
\end{proof}

From the Theorem \ref{abdfk} we get the following theorem.

\begin{thm}
Let $0 < p,q <\infty$, $\psi, w \in M^{+}(0,\infty)$, $\varphi\in M^{+}(0,\infty,\downarrow)$ and $\varphi(r)r^{\frac{1}{p}}\in M^{+}(0,\infty,\uparrow)$. Then the following embedding
\begin{equation} \label{wklt}
G\Lambda_{p,\psi;\varphi} \hookrightarrow \Lambda_{q,w}
\end{equation}
is valid iff

i) $0 < p \le q <\infty$
$$
\int_{0}^{\infty}\sup\limits_{s\in (t,\infty)}\frac{\int_0^t w^q(s)ds}{\Psi_{\frac{p}{q}}(s)} d\nu(t)<\infty
$$
where $\nu$ is the representation measure of $\frac{1}{\varphi(t)}$ with respect to $\Psi_{\frac{p}{q}}(t)=\|\psi^q\|_{\frac{p}{q},(0,t)}$. \\
(ii) If $0<q<p < \infty$, then
$$
\int_{0}^{\infty}\left(\int_{t}^{\infty}\left(\frac{\int_0^t w^q(s)ds}{\Psi_{\frac{p}{q}}(s)}\right)^{\big(\frac{p}{q}\big)'} \psi^p(s)ds\right)^{1/\big(\frac{p}{q}\big)'}d\nu(t),
$$
where $\nu$ is the representation measure of $\frac{1}{\varphi(t)}$ with respect to $\Psi_{\frac{p}{q}}(t)$.
\end{thm}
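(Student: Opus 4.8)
The plan is to recognise the embedding \eqref{wklt} as a reverse Hardy inequality of the form \eqref{cay1} and then to read off the two conditions from Theorem \ref{mak1}, in the same spirit as the proof of Theorem \ref{abdfk}. First I would note that both $\|f\|_{\Lambda_{q,w}}$ and $\|f\|_{G\Lambda_{p,\psi;\varphi}}$ depend on $f$ only through $f^{*}$, so that \eqref{wklt} holds if and only if
\begin{equation*}
\left(\int_{0}^{\infty}w^{q}(t)F(t)^{q}\,dt\right)^{1/q}\lesssim\sup_{r>0}\varphi(r)\left(\int_{0}^{r}\psi^{p}(t)F(t)^{p}\,dt\right)^{1/p}
\end{equation*}
for every non-increasing $F\geq 0$. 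Substituting $H=F^{p}$ (every non-increasing $H\ge 0$ is of this form) and raising to the power $p$, the left-hand side becomes $\big(\int_{0}^{\infty}w^{q}H^{q/p}\big)^{p/q}$, while, on putting $u=\psi^{p}$ so that $U=\Psi_{p}^{p}$ and $\int_{0}^{r}\psi^{p}H=U(r)H_{u}^{**}(r)$, the right-hand side becomes $\es_{r>0}H_{u}^{**}(r)\,(\varphi(r)\Psi_{p}(r))^{p}$. Hence \eqref{wklt} is exactly \eqref{cay1} for the parameters $q/p$, the left weight $w^{q}$, and $u=\psi^{p}$, $v=(\varphi\Psi_{p})^{p}$.

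Next I would run these weights through Section 3. Since $v/U=\varphi^{p}$ is non-increasing, the function $\sigma$ of \eqref{cay4} reduces to $\sigma(t)=\es_{s\in(0,t)}(\varphi(s)\Psi_{p}(s))^{p}$; the monotonicity hypotheses (that $\varphi$ is non-increasing and $\varphi(r)r^{1/p}$ is non-decreasing) are used to guarantee $\sigma(t)\approx(\varphi(t)\Psi_{p}(t))^{p}$ and the non-degeneracy of $\sigma$, so that the representation measure $\nu$ of $U^{q/p}/\sigma^{q/p}$ with respect to $U^{q/p}=\Psi_{p}^{q}=\Psi_{p/q}$ is well defined. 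Writing $\tilde q=q/p$, one has $\tilde q\ge 1$ exactly when $0<p\le q$ and $0<\tilde q<1$ exactly when $0<q<p$, so the two regimes of Theorem \ref{mak1} correspond to the two cases of the statement. In the first case Theorem \ref{mak1}(i) gives the condition $A(1)<\infty$; discarding the harmless outer exponent $1/\tilde q$ and substituting $U^{\tilde q}=\Psi_{p/q}$ and $W(s)=\int_{0}^{s}w^{q}$ produces precisely condition (i).

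In the second case, $0<q<p$, I would apply Theorem \ref{mak1}(ii), which yields $A(2)<\infty$ with the auxiliary function $\zeta$, and then simplify $\zeta$ to the form stated in (ii). This is carried out exactly as in the passage \eqref{gog11}--\eqref{asde2} of the proof of Theorem \ref{abdfk}: integrating by parts in the integral defining $\zeta$ and using the elementary identities $\big(\int_{0}^{s}w^{q}\big)'=w^{q}$ and $(\Psi_{p}^{p})'=\psi^{p}$, one shows that the first summand of $\zeta$ is controlled by the second and that the latter can be rewritten with $\psi^{p}(s)\,ds$ as integrating measure, giving $\zeta(t)\approx\Psi_{p/q}(t)\big(\int_{t}^{\infty}(W(s)/\Psi_{p/q}(s))^{(p/q)'}\psi^{p}(s)\,ds\big)^{1/(p/q)'}$, which is the stated integrand (here $(p/q)'=p/(p-q)$ and $1-\tilde q=(p-q)/p$). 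I expect this integration-by-parts identity, together with the verification that the monotonicity hypotheses indeed give $\sigma(t)\approx(\varphi(t)\Psi_{p}(t))^{p}$ and the non-degeneracy of $\nu$, to be the main obstacle; the reduction of the first paragraph and the exponent bookkeeping are routine.
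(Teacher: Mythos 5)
Your argument is correct and leads to the stated conditions, but it takes a slightly different route from the paper's. The paper's proof is a two-step reduction: it first observes that, since $(f^q)^*=(f^*)^q$, the embedding \eqref{wklt} is equivalent to $G\Lambda_{\frac{p}{q},\psi^q;\varphi^q}\hookrightarrow\Lambda_{1,w^q}$, and then invokes Theorem \ref{abdfk} (with $p$, $\psi$, $\varphi$ replaced by $p/q$, $\psi^q$, $\varphi^q$, and with the weight $w^q$ playing the role of $f^*$, so that $sf^{**}(s)=\int_0^s f^*$ becomes $\int_0^s w^q$). You instead bypass Theorem \ref{abdfk} entirely: you substitute $H=F^p$ directly in \eqref{wklt}, recognize the resulting inequality as \eqref{cay1} with exponent $q/p$, weights $w^q$, $u=\psi^p$, $v=(\varphi\Psi_p)^p$, and then rerun the argument of Theorem \ref{abdfk} (the dichotomy $q/p\ge 1$ versus $q/p<1$, and the integration-by-parts identity \eqref{avfd1} showing $\zeta\approx\zeta_1$) with these general parameters. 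In effect you unroll the citation: the content is the same, since the proof of Theorem \ref{abdfk} is itself exactly this application of Theorem \ref{mak1}. What your version buys is that it makes explicit two points the paper's three-line proof glosses over: that the associate-norm formula of Theorem \ref{abdfk} must be applied with the general weight $w^q$ in place of a decreasing rearrangement, and that the identification of $\nu$ requires $\sigma(t)\approx(\varphi(t)\Psi_p(t))^p$ together with non-degeneracy — a point which, as you note, rests on the monotonicity hypotheses and which the paper assumes implicitly in the proof of Theorem \ref{abdfk} as well (strictly speaking the stated hypothesis is that $\varphi(r)r^{1/p}$ is non-decreasing, not that $\varphi(r)\Psi_p(r)$ is, so this step deserves the scrutiny you give it; note also that your computation gives $\nu$ as the representation measure of $\varphi^{-q}$, consistent with applying Theorem \ref{abdfk} to $\varphi^q$, rather than of $\varphi^{-1}$ as the statement literally says). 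What the paper's route buys is brevity: once Theorem \ref{abdfk} is available, no further computation with $\zeta$ is needed.
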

\begin{proof}
Using a simple observation function, it is obvious that $f$ is decreasing if and only if, $f^q$ is decreasing for all $q>0$. We see that the embedding \eqref{wklt} holds if and only if the following embedding holds
\begin{equation} \label{wkltB}
G\Lambda_{\frac{p}{q},\psi^q;\varphi^q} \hookrightarrow \Lambda_{1,w^q}.
\end{equation}
One can get the required result by using Theorem \ref{abdfk}.
\end{proof}


\subsection*{Acknowledgment}
This paper has been written during the visit of A. Gogatishvili to Ankara University in Ankara and to Ahi-Evran University in Kirsehir.
A special gratitude he gives to these universities for their warm hospitality.

\

\end{document}